\documentclass[letterpaper, 10 pt, conference]{ieeeconf}
\usepackage[utf8]{inputenc}

\IEEEoverridecommandlockouts

\usepackage{balance}

\usepackage{multirow}

\usepackage{amsmath,mathtools} %
\usepackage{amsfonts,amssymb}
\usepackage{bbm}

\usepackage{xspace}  %

\usepackage{graphicx}
\graphicspath{{figs/}}

\usepackage{subcaption}
\usepackage{caption}

\usepackage{hyperref}
\hypersetup{colorlinks=true, linkcolor= blue, allcolors=blue, citecolor = blue, filecolor=black, urlcolor=blue}

\usepackage{todonotes}
\usepackage{comment}

\usepackage{tikz}
\usetikzlibrary{arrows,positioning}  %

\usepackage{pgfplots}\pgfplotsset{compat=1.9}
\usepackage{grffile}
\pgfplotsset{compat=newest} %
\usetikzlibrary{plotmarks}
\usetikzlibrary{arrows.meta}
\usepgfplotslibrary{patchplots}
\usepgfplotslibrary{groupplots}

\pgfplotsset{compat=1.15}
\usepackage{mathrsfs}
\usetikzlibrary{arrows}

\usepackage{tabularx}

\usepackage[ruled]{algorithm}
\usepackage[noend]{algpseudocode}

\usepackage{diagbox}
\newcolumntype{K}[1]{>{\centering\arraybackslash}p{#1}}

\makeatletter
\newcommand{\multiline}[1]{%
 \begin{tabularx}{\dimexpr\linewidth-\ALG@thistlm}[t]{@{}X@{}}
  #1
 \end{tabularx}
}
\makeatother

\usepackage[per-mode=symbol]{siunitx}

\usepackage{cite} %
\usepackage{array} %
\usepackage{booktabs}      %

\newtheorem{theorem}{Theorem}

\newtheorem{proposition}[theorem]{Proposition}

\newcounter{remark}
{\par\endtrivlist\unskip}

\newcounter{problem}
{\par\endtrivlist\unskip}

\usepackage{mylatexdefs}

\newcommand{\bbsym}[1]{\ensuremath{\boldsymbol{#1}}}
\newcommand{\NSL}{N_{\mathrm{SL}}}
\newcommand{\NSSL}{N_{\mathrm{SSL}}}

\usepackage{stfloats}%

\begin{document}

\title{\LARGE \bf Learning to Solve Mixed-Integer Nonlinear Programming with Neural Networks and Sequential Quadratic Programming Refinement}
\author{Viet-Anh Le, {\IEEEmembership{Member, IEEE}},
and Rahul Mangharam, {\IEEEmembership{Senior Member, IEEE}} 
\thanks{The authors are with the Department of Electrical \& Systems Engineering, University of Pennsylvania, Philadelphia, PA 19104, USA (emails: {\tt\small \{vietanh,rahulm\}@engineering.upenn.edu}).}
\thanks{This work was partially supported by US DoT Safety21 National University Transportation Center and NSF under Grant CISE-$2431569$.}
}
\maketitle

\begin{abstract}
We propose a learning-to-optimize framework for solving parametric mixed-integer nonlinear programs arising in model predictive control and trajectory optimization. 
The proposed method combines neural network prediction with a sequential quadratic programming (SQP) refinement layer. 
First, neural networks are trained using a combination of supervised and self-supervised losses to predict integer solutions together with compatible continuous warm starts. 
To account for prediction inaccuracies, the trained predictor generates multiple candidate integer solutions, each paired with a continuous initialization. 
The integer solutions are then fixed, and a differentiable SQP layer refines the associated continuous variables over a prescribed number of iterations. 
The SQP-layer parameters, including the curvature regularization and step size, are trained to improve both feasibility and objective value after every iteration.
By separating integer prediction from continuous optimization-based refinement, the proposed framework improves robustness to imperfect neural predictions while maintaining a predictable computational budget. 
We demonstrate the proposed framework on contact and trajectory optimization for a planar box-pushing example.
\end{abstract}

\section{Introduction}

Mixed-integer optimization arises naturally in many control applications involving coupled continuous control and discrete decision-making, including autonomous driving \cite{quirynen2024real,le2024distributed}, robotics \cite{acosta2025perceptive,le2025multirobot}, and hybrid systems \cite{marcucci2020warm}, \cite{mcallister2022advances}.
Despite the advantages offered by mixed-integer optimization for trajectory optimization and model predictive control (MPC), its real-time implementation remains challenging because solving the mixed-integer programs combines the combinatorial complexity of discrete optimization with the computational burden of continuous optimization.
Learning-to-optimize (L2O) provides a promising approach for reducing the computational cost required for online decision-making by replacing or accelerating iterative solvers with learned mappings from problem parameters to solutions.

In recent years, there has been growing research interest in L2O methods for mixed-integer programming.
Classical supervised-learning approaches for predicting integer solutions in mixed-integer optimization have been proposed in \cite{cauligi2020learning,cauligi2021coco,le2025multirobot}. 
These methods primarily focus on mixed-integer convex programs, where fixing the learned integer variables reduces the original problem to convex continuous optimization problems.
In \cite{xu2025mixed}, a hybrid-action reinforcement learning approach was proposed to jointly learn discrete and continuous control decisions for mixed-integer optimal control. 
However, both supervised learning and reinforcement learning typically require substantial training data, and the resulting models may still produce infeasible solutions.
Self-supervised learning approaches \cite{tang2024learning,boldocky2025learning} have been developed for solving mixed-integer nonlinear programming (MINLPs) by training neural networks (NNs) with loss functions that combine the original objective with penalty terms for constraint violations. 
Though self-supervised learning can improve feasibility by penalizing constraint violations during training, such approaches may yield highly suboptimal solutions, since gradient-based training can converge to poor local optima. 
In \cite{le2026hybrid}, we developed a hybrid L2O framework for mixed-integer quadratic programming (MIQP) that combines neural prediction of the integer variables with QP-based continuous refinement and is trained using a combination of supervised and self-supervised loss functions.

Though NNs trained with either supervised or self-supervised losses can approximate MINLP solutions, prediction errors may still be present and lead to suboptimal or infeasible solutions. 
To improve reliability, an online correction mechanism is therefore necessary. 
For example, \cite{cauligi2021coco} considers evaluating multiple high-probability discrete strategies predicted by the NNs and solving the corresponding optimization problems sequentially until a feasible solution is found.
However, this approach is primarily suited to mixed-integer convex programs, where fixing the discrete strategy yields a convex continuous optimization problem. 
In this paper, we propose an L2O framework for MINLPs using NNs to generate multiple solution candidates and a sequential quadratic programming (SQP) refinement layer. 
The NNs are trained to learn both optimal integer decisions and continuous initializations. 
Each candidate is then refined by a differentiable elastic trust-region SQP layer while keeping the integer decisions fixed, with trainable SQP parameters optimized to improve the continuous solution within a few iterations. 
During online inference, given a fixed number of candidate integer solutions and their corresponding continuous solutions refined through SQP iterations, the final solution is selected based on feasibility and objective value.
Thus, this design incorporates the constrained optimization structure of the original problem while maintaining a predictable online computational budget. 
We demonstrate the performance of the proposed framework on a contact and trajectory optimization problem for planar box pushing.

Compared to the most relevant work in the literature \cite{tang2024learning}, which also develops an L2O framework for MINLP, this paper differs in several aspects.
First, rather than predicting relaxed MINLP solutions with an NN, we train the NN to directly predict mixed-integer solutions that are subsequently refined by the optimization layer. 
Second, while the correction layers in \cite{tang2024learning} are designed primarily to enforce the integrality of relaxed MINLP solutions, our refinement layer is trained to further improve the predicted mixed-integer candidates online with respect to feasibility and optimality. 
Finally, by incorporating an SQP as a refinement layer, our framework enables online optimization to be more explicitly embedded within the L2O architecture.

The remainder of this paper is organized as follows. 
Section~\ref{sec:prb} presents the formulation of the parametric MINLP problem considered in this work. Section~\ref{sec:frwrk} introduces the proposed architecture that combines NN prediction with differentiable SQP refinement, while
Section~\ref{sec:train} presents the two-phase training framework.
Section~\ref{sec:sim} evaluates the proposed approach on a contact-scheduling and trajectory optimization problem for planar box pushing. 
Finally, Section~\ref{sec:conc} concludes the paper.

\section{Problem Formulation}
\label{sec:prb}

We consider a class of parametric mixed-integer optimal control problems (MIOCPs) arising in MPC or trajectory optimization, where the problem parameters may include, for example, the initial state, target state, and other task- or environment-dependent quantities. 
Let $\bbsym{x}_t \in \XXX \subset \RR^{n_x}$ be the system state, $\bbsym{u}_t \in \UUU \subset \RR^{n_u}$ be the continuous control inputs and $\bbsym{z}_t \in \ZZZ \subset \ZZ^{n_z}$ be the integer decision variables. 
Let $H$ be the control horizon length, and $\bbsym{x}_{0:H}, \bbsym{u}_{0:H-1}, \bbsym{z}_{0:H-1}$ be the concatenated vectors for each variable over the control horizon.
Given a vector of problem parameters ${\bbsym{\theta} \in \Theta \subset \RR^{n_p}}$, a parametric MIOCP, where the objective function and constraints are functions of $\bbsym{\theta}$, can be generally written as:
\begin{equation}
\label{eq:miocp}
\begin{aligned}
\minimize_{\bbsym{x}_{0:H}, \bbsym{u}_{0:H-1}, \bbsym{z}_{0:H-1}} \; & c_H(\bbsym{x}_H; \bbsym{\theta}) + \sum_{t=0}^{H-1}\; c_t(\bbsym{x}_t, \bbsym{u}_t, \bbsym{z}_t; \bbsym{\theta}), \\
\text{subject to \quad} \; 
& \bbsym{x}_0 = \bbsym{x}_{\text{init}}(\bbsym{\theta}), \\
& \bbsym{x}_{t+1} = \bbsym{f} (\bbsym{x}_t, \bbsym{u}_t, \bbsym{z}_t; \bbsym{\theta}), \\
& \bbsym{g}_{t}(\bbsym{x}_t, \bbsym{u}_t, \bbsym{z}_t; \bbsym{\theta}) \le 0,\\
& \bbsym{g}_{H} (\bbsym{x}_H; \bbsym{\theta}) \le 0,
\end{aligned}
\end{equation}
with $t = 0, \ldots, H-1$, where the stage cost $c_t(\cdot)$, the terminal cost $c_H (\cdot)$, the dynamics $\bbsym{f}(\cdot)$, the inequality constraints $\bbsym{g}_{t}(\cdot)$ and $\bbsym{g}_{H}(\cdot)$ can be nonlinear, but are assumed to be differentiable.

The MIOCP \eqref{eq:miocp} can be represented by a parametric MINLP.
The continuous decision variable of the MINLP is defined as the stacked state-control trajectory as follows:
\begin{equation}
\bbsym x = \bigl[ \bbsym x_0^\top, \bbsym u_0^\top, \cdots, \bbsym u_{H-1}^\top, \bbsym x_H^\top \bigr]^{\top}.
\end{equation}
Similarly, the discrete decision variables over the prediction horizon are stacked as:
\begin{equation}
\bbsym z =
\begin{bmatrix}
\bbsym z_0^\top, \cdots, \bbsym z_{H-1}^\top
\end{bmatrix}^{\top}
\in \ZZZ,
\end{equation}
where $\ZZZ$ denotes the appropriate set for the integer variables.
The parametric MINLP can be represented in the following compact form:
\begin{equation}
\label{eq:para-minlp}
\begin{aligned}
\minimize_{\bbsym x, \bbsym z}\quad & f(\bbsym x, \bbsym z; \bbsym \theta), \\
\subjectto \quad & \bbsym h(\bbsym x, \bbsym z; \bbsym \theta) = 0, \\
& \bbsym g(\bbsym x, \bbsym z; \bbsym \theta) \le 0, \\
& \bbsym z \in \ZZZ.
\end{aligned}  
\end{equation}

L2O techniques can be used to approximate the solution map of \eqref{eq:para-minlp}.
However, directly predicting the solution using NNs may yield infeasible or highly suboptimal results, whereas conventional iterative MINLP algorithms, such as mixed-integer SQP \cite{quirynen2021sequential,ghezzi2024sequential}, are difficult to execute effectively within a fixed number of iterations and often require substantial problem-specific tuning.
To address these challenges, we decompose the learning process into two stages: NNs first predict multiple candidate integer solutions together with their associated continuous warm starts, after which a fixed number of differentiable elastic trust-region SQP layers refine the continuous variables. 

\begin{figure*}
\centering
\includegraphics[width=0.98\textwidth,trim=0 10 0 5, clip]{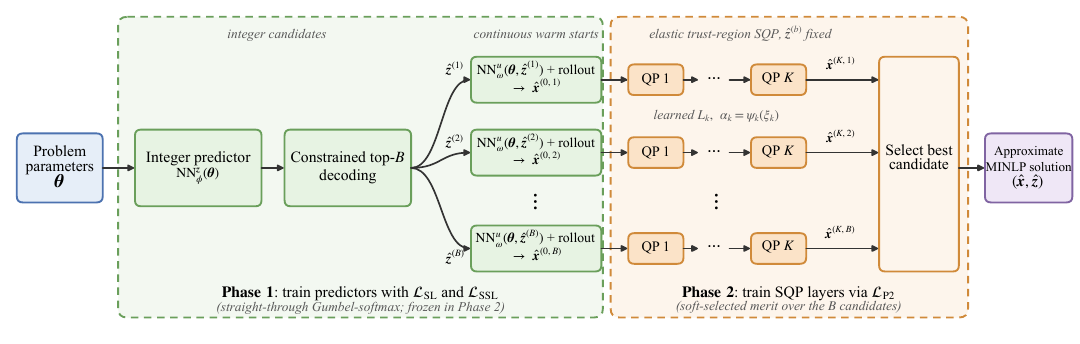}
\caption{Overview of the proposed framework. The architecture consists of NN prediction for candidate integer solutions and continuous warm starts, followed by differentiable SQP refinement and final candidate selection.}
\label{fig:overview}
\vspace{-6mm}
\end{figure*}

\section{Proposed Architecture}
\label{sec:frwrk}

In this section, we describe the proposed framework for learning how to solve a parametric MINLP using two coupled components:
(1) neural networks predict the discrete decisions and an initial continuous solution, and (2) a fixed number of trainable SQP iterations refine the continuous solution.
An overview of the proposed architecture is shown in Figure~\ref{fig:overview}.

\subsection{Neural Solution Map and SQP Refinement Layer}

Given the problem parameters $\bbsym{\theta}$, we first employ an NN to predict the optimal integer decision and an associated continuous solution that serves as a warm start. 
The integer predictor first produces a distribution over admissible integer solutions,
\begin{equation}
\bbsym{\ell}(\bbsym{\theta}) = \mathrm{NN}^{z}_{\bbsym{\phi}}(\bbsym{\theta}),
\end{equation}
where $\bbsym{\phi}$ denotes the NN weights and $\bbsym{\ell}$ represents the corresponding logits. 
For a candidate integer solution $\hat{\bbsym z} (\bbsym{\theta})$ obtained from these logits, the continuous predictor generates a compatible initial control sequence as
\begin{equation}
\hat{\bbsym u}^{(0)} = \mathrm{NN}^{u}_{\bbsym{\omega}} \left( \bbsym{\theta}, \hat{\bbsym z} (\bbsym{\theta}) \right),
\end{equation}
where $\bbsym{\omega}$ denotes the NN weights of the continuous predictor.
In other words, the continuous warm start is conditioned on the predicted integer solution.
In addition, we obtain the state trajectory by forward rollout of the system dynamics using the predicted control sequence $\hat{\bbsym u}^{(0)}(\bbsym \theta)$ and possibly the discrete decisions $\hat{\bbsym z}(\bbsym \theta)$. 
Specifically,
\begin{equation}
\bbsym x_{t+1}^{(0)} = \bbsym f\left( \bbsym x_t^{(0)}, \hat{\bbsym u}_{t}^{(0)},
\hat{\bbsym z}_{t}; \bbsym\theta \right), \; t=0,\ldots,H-1,
\end{equation}
with $\bbsym x_0^{(0)} = \bbsym x_{\mathrm{init}}(\bbsym\theta)$.
The resulting state-control trajectory is denoted by $\hat{\bbsym x}^{(0)}(\bbsym\theta)$ and is used as an initialization for the SQP layer, while the predicted discrete decisions $\hat{\bbsym z} (\bbsym{\theta})$ are fixed throughout the SQP iterations.
Since the state trajectory is generated by forward rollout, $\hat{\bbsym x}^{(0)}(\bbsym\theta)$ satisfies the system dynamics by construction. 
After fixing $\hat{\bbsym z} (\bbsym \theta)$, the remaining problem is a continuous NLP:
\begin{equation}
\label{eq:fi-nlp}
\begin{aligned}
\minimize_{\bbsym x} \quad & f \bigl(\bbsym x, \hat{\bbsym z} (\bbsym \theta); \bbsym \theta \bigr), \\
\subjectto \quad
& \bbsym h \bigl(\bbsym x,\hat{\bbsym z} (\bbsym \theta);\bbsym \theta \bigr)=0, \\
& \bbsym g \bigl(\bbsym x, \hat{\bbsym z} (\bbsym \theta); \bbsym \theta \bigr)\le 0.
\end{aligned}  
\end{equation}
The SQP refinement layer approximately solves this fixed-integer NLP. 
Starting from $\hat{\bbsym x}^0(\bbsym\theta)$, we apply $K$ QP iterations within the SQP refinement layer.
In particular, the SQP refinement layer generates a sequence of continuous iterates $\hat{\bbsym x}^1, \ldots, \hat{\bbsym x}^{(K)}$ according to
\begin{equation}
\hat{\bbsym x}^{(k+1)} = \hat{\bbsym x}^{(k)} + \alpha_k \bbsym d^{(k)},
\; k=0,\ldots,K-1,
\end{equation}
where $\alpha_k > 0$ is a step length and $\bbsym d^{(k)}$ is obtained by solving a QP approximation of \eqref{eq:fi-nlp} around the current iteration $\hat{\bbsym x}^{(k)}$.
For each iteration $k$, we define
\begin{equation}
\begin{split}
f_k &= f \bigl( \hat{\bbsym x}^{(k)}, \hat{\bbsym z}(\bbsym \theta);
\bbsym \theta \bigr), \;
\bbsym h_k = \bbsym h\bigl( \hat{\bbsym x}^{(k)}, \hat{\bbsym z}(\bbsym \theta); \bbsym \theta \bigr),
\\
\bbsym g_k &= \bbsym g\bigl( \hat{\bbsym x}^{(k)}, \hat{\bbsym z}(\bbsym \theta); \bbsym \theta \bigr),
\end{split}
\end{equation}
and
\begin{equation}
\begin{split}
\bbsym c_k &= \nabla_{\bbsym x}
f\bigl( \hat{\bbsym x}^{(k)}, \hat{\bbsym z}(\bbsym \theta); \bbsym \theta \bigr), \,
\bbsym A_k = \nabla_{\bbsym x} \bbsym h \bigl( \hat{\bbsym x}^{(k)}, \hat{\bbsym z}(\bbsym \theta); \bbsym \theta \bigr), \\
\bbsym C_k &= \nabla_{\bbsym x} \bbsym g\bigl( \hat{\bbsym x}^{(k)}, \hat{\bbsym z}(\bbsym \theta); \bbsym \theta \bigr).
\end{split}
\end{equation}
Moreover, let $\bbsym B_{\mathrm{quad}}$ denote the exact Hessian of the terms in the objective function that are convex quadratic in the continuous decision variables.
These terms are retained exactly in the quadratic objective of the QP subproblem. 
The remaining second-order information associated with the nonlinear terms is captured by a learned curvature matrix $\bbsym B_{\mathrm{nl},k}$. 
The total curvature matrix in the $k$-th QP subproblem is therefore computed as:
\begin{equation}
\bbsym B_k = \bbsym B_{\mathrm{quad}} + \bbsym B_{\mathrm{nl},k}.
\end{equation}
In a classical SQP method, the curvature associated with the nonlinear terms may be obtained from the exact Hessian of the Lagrangian or from a quasi-Newton approximation. 
In the proposed SQP refinement, $\bbsym B_{\mathrm{nl},k}$ is parameterized and learned while ensuring that $\bbsym B_k$ remains positive definite.

The QP approximation of \eqref{eq:fi-nlp} around the current iteration $\hat{\bbsym x}^{(k)}$ is given by:
\begin{equation}
\label{eq:qp-subproblem}
\begin{aligned}
\minimize_{\bbsym d} \quad &
\frac{1}{2} \bbsym d^\top \bbsym B_k \bbsym d + \bbsym c_k^\top \bbsym d, \\
\subjectto \quad & \bbsym A_k\bbsym d+\bbsym h_k=\bbsym 0, \\
& \bbsym C_k\bbsym d+\bbsym g_k\leq\bbsym 0, \\
& -\Delta_k\bbsym 1 \leq \bbsym d \leq \Delta_k\bbsym 1,
\end{aligned}
\end{equation}
where $\Delta_k>0$ denotes the trust-region radius used to restrict the QP approximation to a local neighborhood.
Since the QP approximation \eqref{eq:qp-subproblem} may become infeasible, even if the fixed-integer nonlinear program \eqref{eq:fi-nlp} itself is feasible, due to local linearization errors, we introduce slack variables to guarantee feasibility, leading to the following elastic QP problem:
\begin{equation}
\label{eq:elastic-qp-subproblem}
\begin{aligned}
\minimize_{\bbsym d, \bbsym s_h, \bbsym s_g} \quad
& \frac{1}{2} \bbsym d^\top \bbsym B_k \bbsym d + \bbsym c_k^\top \bbsym d
+ \rho_{h,k} \norm{\bbsym s_h}_2^2 + \rho_{g,k} \norm{\bbsym s_g}_2^2,
\\
\subjectto \quad & \bbsym A_k \bbsym d + \bbsym h_k = \bbsym s_h, \\
& \bbsym C_k \bbsym d + \bbsym g_k \le \bbsym s_g, \\
& \bbsym s_g \ge \bbsym 0, \\
& - \Delta_k \bbsym 1 \le \bbsym d \le \Delta_k \bbsym 1,
\end{aligned}  
\end{equation}
where $\bbsym s_h$ and $\bbsym s_g$ are the slack variables corresponding to the equality and inequality constraints, while $\rho_{h,k}$ and $\rho_{g,k}$ are the slack penalty weights.
The solution of \eqref{eq:elastic-qp-subproblem} is written compactly as
$\bbsym d^{(k)} = \QQQ_k \left( \hat{\bbsym x}^{(k)}; \bbsym \theta, \hat{\bbsym z}, \bbsym \psi_k \right)$,
where $\bbsym \psi_k$ collects the trainable parameters of the $k$-th QP layer, which will be described in the next subsection.
The corresponding SQP update is:
\begin{equation}
\label{eq:sqp_update_trainable}
\hat{\bbsym x}^{(k+1)} = \hat{\bbsym x}^{(k)} + \alpha_k \mathcal Q_k \left( \hat{\bbsym x}^{(k)}; \bbsym \theta, \hat{\bbsym z}, \bbsym \psi_k \right).
\end{equation}

In summary, the final continuous solution after $K$ QP iterations is denoted by
\begin{equation}
\hat{\bbsym x} (\bbsym\theta) = \bbsym T_{\bbsym\psi}^{K}
\left( \hat{\bbsym x}^{(0)}(\bbsym\theta), \hat{\bbsym z}(\bbsym\theta); \bbsym\theta \right),
\end{equation}
where $\bbsym \psi = \left\{ \bbsym \psi_0,\ldots,\bbsym \psi_{K-1} \right\}$.

\subsection{Trainable and Differentiable QP Layers}
\label{subsec:differentiable_qp}

If $\bbsym B_k \succ 0$, the elastic trust-region SQP subproblem is strongly convex with respect to the SQP direction $\bbsym d$. 
Moreover, for any fixed $\bbsym d$, the equality slack is uniquely determined by $\bbsym s_h=\bbsym A_k\bbsym d+\bbsym h_k$, while the optimal
inequality slack is $\bbsym s_g =[\bbsym C_k\bbsym d+\bbsym g_k]_+$.
Consequently, the QP subproblem admits a unique primal optimizer, if it exists.
Under standard regularity conditions and a locally fixed active set, the parametric solution map is locally differentiable, and its derivatives can be obtained by implicitly differentiating the KKT conditions \cite{amos2017optnet,agrawal2019differentiable,magoon2026differentiation}. 
Consequently, a strongly convex QP can be integrated as a differentiable optimization layer and the gradients of the training loss are propagated through both the QP solution maps and the SQP updates across all $K$ layers, enabling optimization of the trainable SQP parameters.
Given a training loss $\LLL$, the gradient with respect to the parameters of the $k$-th QP layer is
\begin{equation}
\label{eq:layer_gradient}
\frac{\partial \LLL}{\partial \bbsym \psi_k}
=
\frac{\partial \LLL}{\partial \bbsym x^{(K)}}
\left(
\prod_{l=k+1}^{K-1}
\frac{\partial \bbsym x^{(l+1)}}
{\partial \bbsym x^{(l)}}
\right)
\frac{\partial \bbsym x^{(k+1)}}
{\partial \bbsym \psi_k}.
\end{equation}
The term
$\partial \bbsym x^{(k+1)}/\partial \bbsym \psi_k$ contains the derivative of the QP solution map:
\begin{equation}
\frac{\partial \bbsym x^{(k+1)}}{\partial \bbsym \psi_k}
= \bbsym d^{(k)} \frac{\partial \alpha_k}{\partial \bbsym \psi_k}
+ \alpha_k \frac{\partial \bbsym d^{(k)}}{\partial \bbsym \psi_k},
\end{equation}
where $\partial \bbsym d^{(k)}/\partial \bbsym \psi_k$ is obtained by differentiating through the QP layer.
The details of differentiating through a convex QP layer can be found in
\cite{amos2017optnet,agrawal2019differentiable,magoon2026differentiation}.
Next, we describe the trainable SQP parameters at the $k$-th QP layer.
In this paper, we consider that only the curvature and the step-length parameter are trainable, while slack penalty weights and the trust-region radius are fixed across the layers.

\paragraph{Trainable curvature approximation}
For the terms in the objective with an exact quadratic structure, the corresponding curvature
$\bbsym{B}_{\mathrm{quad}}$ is retained explicitly. 
For the remaining nonlinear terms, we parameterize the curvature approximation as:
\begin{equation}
\label{eq:trainable_Bnl}
\bbsym{B}_{\mathrm{nl},k} = \bbsym{L}_k\bbsym{L}_k^\top + \mu_{\min}\bbsym{I},
\; \mu_{\min}>0.
\end{equation}
The total curvature matrix used in the $k$-th SQP subproblem is therefore
\begin{equation}
\label{eq:trainable_B}
\bbsym{B}_k = \bbsym{B}_{\mathrm{quad}} + \bbsym{L}_k\bbsym{L}_k^\top + \mu_{\min}\bbsym{I}.
\end{equation}
Since $\bbsym{B}_{\mathrm{quad}} \succeq 0$, this construction guarantees $\bbsym{B}_k\succ0$, yielding a strictly convex quadratic model with respect to the SQP search direction. 
For large-scale problems, $\bbsym{L}_k$ may be restricted to a diagonal or low-rank parameterization to reduce the number of trainable parameters.

\paragraph{Trainable step length}
The SQP step length is parameterized as
\begin{equation}
\alpha_k
=
\alpha_{\max}\operatorname{sigmoid}(a_k)
=
\frac{\alpha_{\max}}{1+\exp(-a_k)},
\end{equation}
which guarantees
$0<\alpha_k<\alpha_{\max}$.

We use a lightweight neural network $\bbsym{\psi}_k$ to adapt the local quadratic model and step length to the current problem instance and iteration as follows:
\begin{equation}
\left( \bbsym{L}_k,a_k \right) = \bbsym{\psi}_k(\bbsym{\xi}_k),
\end{equation}
where $\bbsym{\xi}_k$ denotes a feature vector characterizing the current problem instance and SQP iteration.

In the next proposition, we show that, under the proposed parameterization, the elastic trust-region QP at each SQP iteration is strongly convex with respect to $\bbsym d$ and admits a unique primal solution. 
Under standard regularity conditions, its solution map is locally differentiable with respect to the QP parameters, thereby enabling well-posed training.

\begin{proposition}
\label{prop:sqp_layer_differentiability}
\textit{(Well-posedness and local differentiability of an SQP layer)}
Consider the elastic trust-region QP at the $k$-th SQP layer \eqref{eq:elastic-qp-subproblem} given the parameterization of $\bbsym B_k$ and $\alpha_k$.
Let $\bbsym p_k$ denote the collection of parameters defining the elastic trust-region QP, \ie $\bbsym p_k = \left(\bbsym B_k, \bbsym c_k, \bbsym A_k, \bbsym h_k, \bbsym  C_k,\bbsym g_k\right)$.
Then the following statements hold.

\begin{enumerate}
\item The elastic QP \eqref{eq:elastic-qp-subproblem} is feasible for any finite $\bbsym h_k$ and $\bbsym g_k$ and is strongly convex in the primal variables $(\bbsym d,\bbsym s_h,\bbsym s_g)$.
Consequently, it admits a unique primal optimal solution $(\bbsym d^{(k)},\bbsym s_h^k,\bbsym s_g^k)$.

\item If, at a nominal parameter $\bar{\bbsym p}_k$, the linear independence constraint qualification (LICQ) and strict complementarity hold for the active constraints, then the local primal solution map $\bbsym p_k \mapsto (\bbsym d^{(k)},\bbsym s_h^{(k)},\bbsym s_g^{(k)})$
is continuously differentiable in a neighborhood of $\bar{\bbsym p}_k$.
\end{enumerate}

Consequently, each SQP layer is locally differentiable with respect to its inputs and parameters.
\end{proposition}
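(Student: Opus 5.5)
For part~1, I will first exhibit an explicit feasible point. Take $\bbsym d=\bbsym 0$, which satisfies the trust-region bounds since $\Delta_k>0$. Then set $\bbsym s_h=\bbsym h_k$ and $\bbsym s_g=[\bbsym g_k]_+$. All constraints of \eqref{eq:elastic-qp-subproblem} then hold for any finite $\bbsym h_k,\bbsym g_k$.

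For strong convexity, the objective is a quadratic form in $(\bbsym d,\bbsym s_h,\bbsym s_g)$ with block-diagonal Hessian $\mathrm{diag}(\bbsym B_k,2\rho_{h,k}\bbsym I,2\rho_{g,k}\bbsym I)$. By \eqref{eq:trainable_B}, $\bbsym B_k\succeq\mu_{\min}\bbsym I$ because $\bbsym B_{\mathrm{quad}}\succeq0$ and $\bbsym L_k\bbsym L_k^\top\succeq0$. With $\rho_{h,k},\rho_{g,k}>0$, the Hessian is therefore bounded below by $\min\{\mu_{\min},2\rho_{h,k},2\rho_{g,k}\}\bbsym I\succ0$.

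The feasible set is a nonempty polyhedron, hence closed and convex. A strongly convex continuous function is coercive, so a minimizer exists by Weierstrass applied on a sublevel set. Strict convexity then gives uniqueness. I will note that the parameterization of $\alpha_k$ plays no role here; it only enters the update \eqref{eq:sqp_update_trainable}, which is smooth in $a_k$ by the sigmoid.

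For part~2, I will write the KKT system of the elastic QP. Collect the constraints as the equalities $\bbsym A_k\bbsym d+\bbsym h_k-\bbsym s_h=\bbsym 0$ and the inequalities $\bbsym C_k\bbsym d+\bbsym g_k-\bbsym s_g\le\bbsym 0$, $-\bbsym s_g\le\bbsym 0$, $\pm\bbsym d-\Delta_k\bbsym 1\le\bbsym 0$. The KKT map is
\begin{equation*}
F(\bbsym y,\bbsym\lambda,\bbsym\nu;\bbsym p_k)=\begin{bmatrix}\nabla_{\bbsym y}\mathcal L(\bbsym y,\bbsym\lambda,\bbsym\nu;\bbsym p_k)\\ \bbsym A_k\bbsym d+\bbsym h_k-\bbsym s_h\\ \mathrm{diag}(\bbsym\nu)\,\bbsym G(\bbsym y;\bbsym p_k)\end{bmatrix}=\bbsym 0,
\end{equation*}
where $\bbsym y=(\bbsym d,\bbsym s_h,\bbsym s_g)$, $\bbsym\lambda$ collects the equality multipliers, and $\bbsym\nu$ the inequality multipliers. $F$ is polynomial, hence $C^\infty$, in $(\bbsym y,\bbsym\lambda,\bbsym\nu,\bbsym p_k)$.

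Since the constraints are affine, KKT conditions are necessary and sufficient. Under LICQ at $\bar{\bbsym p}_k$, the multipliers are unique. I will then invoke the standard result (Fiacco's sensitivity theorem, as used in \cite{amos2017optnet}) that LICQ, strict complementarity and the second-order sufficient condition imply the KKT Jacobian $\partial F/\partial(\bbsym y,\bbsym\lambda,\bbsym\nu)$ is nonsingular. Here the second-order condition holds automatically because the Hessian is positive definite on the whole space.

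The main step is verifying this nonsingularity. I will try the standard argument first. By strict complementarity, rows for inactive constraints reduce to $G_i\,\delta\nu_i=0$ with $G_i\neq0$, so $\delta\nu_i=0$ and those constraints decouple. What remains is a saddle-point matrix $\begin{bmatrix}\bbsym Q&\bbsym J^\top\\ \bbsym J&\bbsym 0\end{bmatrix}$, where $\bbsym Q\succ0$ and $\bbsym J$ is the full-row-rank active-constraint Jacobian (by LICQ), and such a matrix is invertible. Multipliers of active constraints are nonzero by strict complementarity, which is what keeps their rows in the form $\nu_i\nabla G_i^\top$.

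The implicit function theorem then yields a $C^1$ map $\bbsym p_k\mapsto(\bbsym y,\bbsym\lambda,\bbsym\nu)$ near $\bar{\bbsym p}_k$. Continuity of this map and strict complementarity guarantee that the active set stays fixed nearby. Consequently, the implicitly defined point satisfies the KKT conditions and hence, by part~1, coincides with the unique optimizer.

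Finally, for the concluding sentence, I will compose maps. The data $\bbsym p_k$ depend smoothly on $\hat{\bbsym x}^{(k)}$ and on $\bbsym\psi_k$, since $f,\bbsym h,\bbsym g$ are differentiable and $\bbsym B_k$, $\alpha_k$ are smooth in the network outputs. This needs $C^1$ data, i.e.\ twice differentiable $f,\bbsym h,\bbsym g$ for the Jacobians to be differentiable; I will state this as an assumption. The update $\hat{\bbsym x}^{(k)}+\alpha_k\bbsym d^{(k)}$ is then $C^1$ by the chain rule.
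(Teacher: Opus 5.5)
Your proposal is correct and follows essentially the same route as the paper: the same feasible point $(\bbsym d,\bbsym s_h,\bbsym s_g)=(\bbsym 0,\bbsym h_k,[\bbsym g_k]_+)$, strong convexity from the block-diagonal positive-definite Hessian giving a unique optimizer, and the KKT system plus the implicit function theorem under LICQ and strict complementarity. Your version is more careful in several places: existence via coercivity, the correct factor $2\rho$ in the slack blocks of the Hessian, an explicit saddle-point argument for nonsingularity, persistence of the active set, and the need for twice-differentiable $f,\bbsym h,\bbsym g$ to differentiate the layer with respect to $\hat{\bbsym x}^{(k)}$. These are refinements rather than a different approach.
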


\begin{proof}
Feasibility follows by choosing $\bbsym d=\bbsym 0$, $\bbsym s_h=\bbsym h_k$, and $\bbsym s_g=[\bbsym g_k]_+$, which satisfies all constraints of \eqref{eq:elastic-qp-subproblem}.

Next, let $\bbsym v = [\bbsym d^\top, \bbsym s_h^\top, \bbsym s_g^\top]^{\top}$ denote the stacked primal variable of the elastic QP.
The Hessian of the objective with respect to $\bbsym v$ is
\begin{equation}
\nabla_{\bbsym v\bbsym v}^{2} q_k =
\begin{bmatrix}
\bbsym B_k & \bbsym 0 & \bbsym 0 \\
\bbsym 0 & \rho_{h,k}\bbsym I & \bbsym 0 \\
\bbsym 0 & \bbsym 0 & \rho_{g,k}\bbsym I
\end{bmatrix}.
\end{equation}
By design of the parameterization, $\bbsym B_k\succ0$, $\rho_{h,k}>0$, and $\rho_{g,k}>0$; therefore, $\nabla_{\bbsym v\bbsym v}^{2} q_k\succ0$.
Hence, the elastic QP objective is strongly convex in the primal variable $\bbsym v$, and the feasible elastic QP admits a
unique primal optimal solution $(\bbsym d^{(k)}, \bbsym s_h^{(k)}, \bbsym s_g^{(k)})$.

Next, we prove local differentiability. 
For a fixed active set, the KKT conditions of \eqref{eq:elastic-qp-subproblem} can be written compactly as $ F(\bbsym w^{(k)}, \bbsym p_k)=\bbsym 0, $ where $\bbsym w^{(k)}$ contains the primal and dual variables of the QP. 
The mapping $F$ is continuously differentiable with respect to both $\bbsym w^{(k)}$ and $\bbsym p_k$.
Under LICQ and strict complementarity, together with the positive-definite QP Hessian established above, the Jacobian of the KKT system with respect to $\bbsym w^{(k)}$ is nonsingular at the nominal solution. 
The implicit function theorem therefore guarantees the existence of a neighborhood of
$\bar{\bbsym p}_k$ in which the primal-dual solution is a continuously differentiable function of the QP parameters. 
Finally, since $\alpha_k$ is differentiable with respect to its trainable parameterization and $\bbsym d^{(k)}$ is locally differentiable with respect to the QP parameters, the SQP update \eqref{eq:sqp_update_trainable} is also locally continuously differentiable. 
This completes the proof.
\end{proof}

\section{Training Framework}
\label{sec:train}

In this section, we detail the proposed two-phase training framework. 
In Phase 1, the NN is trained to predict candidate solutions to the parametric MINLP.
In Phase 2, the trained NN is frozen and used to generate a number of candidate solutions. 
For each candidate, the discrete decisions are fixed, while the continuous solution serves as a warm start for a differentiable SQP refinement layer trained to improve feasibility and optimality.

\subsection{Training Neural Network Predictors}

In Phase 1, we train the NN predictor using a self-supervised loss combined with a supervised loss, if training labels are available.
First, let $\NSL$ and $\NSSL$ be the numbers of training instances for SL and SSL, respectively. 
We define the training dataset for supervised learning as 
${\mathcal{D}_{\mathrm{SL}} = \left\{ \left( \bbsym{\theta}_i, \bbsym{u}_i^{*}, \bbsym{z}_i^{*} \right) \right\}_{i=1}^{\NSL}}$, where $\bbsym{u}_i^{*}$, $\bbsym{z}_i^{*}$ denote the optimal continuous and discrete solutions generated from a solver, respectively.
Likewise, we define $\mathcal{D}_{\mathrm{SSL}} = \left\{ \bbsym{\theta}_j \right\}_{j=1}^{\NSSL}$ as the set of training samples for self-supervised learning, where the solution labels are not required.

For the integer variables, let $\Xi = \{\bbsym{\zeta}_1, \ldots, \bbsym{\zeta}_M\}$ denote the set of admissible values of the integer variable at each time step $t$. 
Let $\bbsym{\ell}_t = [\ell_{t,1}, \ldots, \ell_{t,M}]^\top \in \RR^M$ denote the NN output logits at time step $t$, with $\ell_{t,m}$ associated with $\bbsym{\zeta}_m$.
The corresponding categorical probability is:
\begin{equation}
p_{\phi}(\bbsym z_t = \bbsym{\zeta}_m \mid \bbsym{\theta}) = \frac{\exp(\ell_{t,m})} {\sum_{r=1}^{M}\exp(\ell_{t,r})},
\end{equation}
for $m=1,\ldots,M,\; t=0,\ldots,H-1$.
The supervised loss for the integer variables is given by the cross-entropy loss, averaged over the time steps and samples:
\begin{equation}
\begin{multlined}
\LLL_{\mathrm{int}} =
-\frac{1}{\NSL H} \sum_{i=1}^{\NSL} \sum_{t=0}^{H-1} \sum_{m=1}^{M} \mathbf{1}\!\left[ \bbsym z_{i,t}^{*} = \bbsym{\zeta}_m \right] \\
\qquad\qquad\times
\log p_{\phi}\!\left( \bbsym z_t = \bbsym{\zeta}_m \mid \bbsym{\theta}_i \right).
\end{multlined}
\end{equation}
For the continuous variables, we use a standard mean-squared error loss over the predicted input sequences:
\begin{equation}
\LLL_{\mathrm{cont}} = \frac{1}{\NSL}
\sum_{i=1}^{\NSL} \left\| \hat{\bbsym{u}}(\bbsym{\theta}_i, \hat{\bbsym z} (\bbsym{\theta}_i)) - \bbsym{u}_{i}^{*} \right\|_2^2.
\end{equation}
The supervised loss is then given by
\begin{equation}
\LLL_{\mathrm{SL}} = \lambda_{\mathrm{int}} \LLL_{\mathrm{int}} + \lambda_{\mathrm{cont}} \LLL_{\mathrm{cont}},
\end{equation}
where $\lambda_{\mathrm{int}},\lambda_{\mathrm{cont}}>0$ balance the integer and continuous prediction losses.

For computing the self-supervised loss, the integer prediction $\hat{\bbsym z}_j$ for each training instance $\theta_j$ is obtained from the NN logits using the straight-through Gumbel-Softmax estimator \cite{jang2017categorical}. 
In the forward pass, a hard one-hot mode assignment is used to evaluate the original objective and constraint functions, while during backpropagation, gradients are propagated through the corresponding continuous Gumbel-Softmax relaxation.
Given the predicted continuous variables $\hat{\bbsym x}_j = \bbsym x (\bbsym \theta_j)$ and the integer variables $\hat{\bbsym z}_j = \bbsym z (\bbsym \theta_j)$, we define the self-supervised loss as follows,
\begin{equation}
\begin{aligned}
\LLL_{\mathrm{SSL}} &= \frac{1}{\NSSL} \sum_{j = 1}^{N_\mathrm{SSL}} \!\!
\Big[  f \left( \hat{\bbsym x}_j, \hat{\bbsym z}_j; \bbsym{\theta}_j \right) 
+\rho_h \left\| \bbsym h\left( \hat{\bbsym x}_j, \hat{\bbsym z}_j; \bbsym{\theta}_j \right) \right\|_2^2 \\
& \qquad\qquad\qquad  
+ \rho_g \left\| \left[ \bbsym g\left( \hat{\bbsym x}_j, \hat{\bbsym z}_j; \bbsym{\theta}_j \right) \right]_+ \right\|_2^2 \Big],
\end{aligned}
\end{equation}
where $\rho_h,\rho_g>0$ are penalty weights.

The training loss for Phase 1 linearly combines the supervised and self-supervised losses as follows,
\begin{equation}
\LLL_{\mathrm{P1}} = (1-\beta) \LLL_{\mathrm{SL}} + \beta \LLL_{\mathrm{SSL}},
\end{equation}
where $\beta \in [0,1]$ balances the advantages of supervised and self-supervised terms.

\subsection{Training SQP Refinement Layer}

Phase 1 provides a fast prediction of both the integer and continuous solutions. 
However, due to prediction errors, the integer solution associated with the highest logit may be infeasible or suboptimal, while alternative candidates can yield lower constraint violation or objective value.
Therefore, we instead consider the $B$ most promising candidates from the NN.
The main goal of Phase 2 is to train the SQP layer to refine the $B$ continuous warm starts over a fixed number of iterations to improve optimality and feasibility.

First, given an instance $\bbsym{\theta}$ of the problem parameters and the NN trained from Phase 1,  we construct $B$ candidate integer solutions, together with their associated continuous warm starts $\{ \big( \hat{\bbsym{x}}^{(b)}, \hat{\bbsym{z}}^{(b)} \big) \}_{b=1}^{B}$.
For each candidate, the integer solution $\hat{\bbsym{z}}^{(b)}$ is fixed, reducing
the original mixed-integer problem to the continuous nonlinear program
\begin{equation}
\begin{aligned}
\minimize_{\bbsym{x}} \quad &
f \left(\bbsym{x}, \hat{\bbsym{z}}^{(b)}; \bbsym{\theta}\right) \\
\subjectto \quad & \bbsym{h}\left(\bbsym{x}, \hat{\bbsym{z}}^{(b)}; \bbsym{\theta}\right) = \bbsym{0}, \\
& \bbsym{g} \left(\bbsym{x}, \hat{\bbsym{z}}^{(b)}; \bbsym{\theta} \right) \le \bbsym{0}.
\end{aligned}
\end{equation}
Starting from $\hat{\bbsym{x}}^{(b)}$ as the initial continuous solution $\bbsym{x}^{(0,b)}$, we perform refinement through $K$ differentiable QP layers to obtain $\bbsym{x}^{(K,b)}$. 
Note that here we use $b=1,\ldots,B$ to index the candidate solutions generated by the frozen predictor, and $k=1,\ldots,K$ to index the QP layers in SQP refinement.
To evaluate the quality of each SQP iteration, we define a feasibility-aware merit function that combines the objective with penalties on constraint violations of the original MINLP problem as follows:
\begin{equation}
\begin{aligned}
\Phi(\bbsym{x},\bbsym{z};\bbsym{\theta}) &=
f (\bbsym{x},\bbsym{z};\bbsym{\theta}) + \rho_h \left\| \bbsym{h}(\bbsym{x},\bbsym{z};\bbsym{\theta}) \right\|_2^2 \\
& + \rho_g \left\| \left[ \bbsym{g}(\bbsym{x},\bbsym{z};\bbsym{\theta}) \right]_+ \right\|_2^2 +
\rho_{\mathrm{tail}}\, \VVV(\bbsym{x},\bbsym{z};\bbsym{\theta}),
\end{aligned}
\end{equation}
where $\rho_{\mathrm{tail}}>0$ is a penalty weight and the term $\rho_{\mathrm{tail}} \, \VVV(\bbsym{x}, \bbsym{z}; \bbsym{\theta})$ penalizes the largest constraint violations, defined by a smooth approximation of the maximum over the equality and inequality residuals.

The SQP parameters are therefore trained by minimizing the merit function over the $B$ candidates. 
For each $b$, we define the beam score as:
\begin{equation}
S^{(b)} = \Phi\!\left( \bbsym{x}^{(K,b)}, \bbsym{z}^{(b)}; \bbsym{\theta} \right) 
\!+\! \sum_{k=1}^{K-1} \lambda_k\, \Phi \! \left( \bbsym{x}^{(k,b)}, \bbsym{z}^{(b)}; \bbsym{\theta} \right),
\end{equation}
where $\lambda_k \ge 0$ weights the contributions of the SQP layers.
The training loss for Phase 2 is then computed by averaging over $N_{\mathrm{P2}}$ problem instances as follows:
\begin{equation}
\LLL_{\mathrm{P2}} = \frac{1}{N_{\mathrm{P2}}}
\sum_{n=1}^{N_{\mathrm{P2}}} \sum_{b=1}^{B} w_n^{(b)} S_n^{(b)},
\end{equation}
where $w_n^{(b)}$ denotes the differentiable soft-selection weight assigned to candidate $b$ for problem instance $n$, defined as:
\begin{equation}
w_n^{(b)} =
\frac{\exp\!\left(-S_n^{(b)}/(\tau s_n)\right)}{\sum_{r=1}^{B} \exp\!\left(-S_n^{(r)}/(\tau s_n)\right)},
\end{equation}
where $\tau > 0$ controls the sharpness of the selection and
$s_n>0$ normalizes the merit scale for each problem instance.
This soft selection focuses the gradient on promising candidates while still allowing multiple near-optimal candidates to contribute to training.

\section{Numerical Examples}
\label{sec:sim}

In this section, we evaluate the performance of the proposed framework on a numerical example for contact scheduling and trajectory optimization in planar box pushing.

\subsection{Contact Scheduling for Box Pushing}

We formulate an MINLP for planar trajectory and contact scheduling in which a pusher drives a rectangular object to a desired pose through contact and free-motion modes \cite{graesdal2024tight}. 
We retain the quasi-static pushing model, including the ellipsoidal limit-surface approximation, frictional contact constraints, and force-motion coupling, but replace the graph-of-convex-sets and semidefinite-relaxation formulation in \cite{graesdal2024tight} with an MINLP formulation. 
Binary variables select face-contact or collision-free modes at each time step, with sticking contact enforced in active contact modes and contact-face transitions required to pass through free motion.

Let $\bbsym p_k \in \PPP$ and $\bbsym s_k \in \SSS_p$ denote the positions of the object and pusher, respectively, where $\PPP$ and $\SSS_p$ denote their admissible position sets.
Let $\theta_k$ denote the object orientation, $\bbsym u_k \in \RR^2$ the pusher velocity, $\bbsym f_k \in \RR^2$ the contact force expressed in the object frame, and $\tau_k$ the corresponding contact torque.
We consider the following bound constraints on the states and inputs \begin{equation}
\theta_k \in [\underline{\theta},\overline{\theta}], \;
\underline u_j \le u_{x,k}, u_{y,k} \le \overline u_j.
\end{equation}
The object and pusher evolve according to:
\begin{equation}
\begin{split}
\bbsym p_{k+1} &= \bbsym p_k + \Delta t\,R(\theta_k)\bbsym f_k/f_{\max}^2,\\
\theta_{k+1} &= \theta_k + \Delta t\,\tau_k/\tau_{\max}^2, \\
\bbsym s_{k+1} &= \bbsym s_k + \Delta t\,\bbsym u_k,
\end{split}
\end{equation}
where $f_{\max}$ and $\tau_{\max}$ denote the characteristic force and torque scales of the quasi-static model, and $R(\theta_k)$ is the rotation matrix associated with $\theta_k$.
Let $\bbsym q_k$ denote the pusher position expressed in the object frame:
\begin{equation}
\bbsym q_k = R(\theta_k)^\top (\bbsym s_k-\bbsym p_k).
\end{equation}
Denote the contact-mode and free-space-mode sets as $\MMM_c=\{1,\ldots,4\}$, $\MMM_f=\{5,\ldots,8\}$, respectively.
Modes in $\MMM_c$ correspond to contact with the four faces, while modes in $\MMM_f$ represent the corresponding collision-free regions.
We impose a constraint that exactly one mode is selected at each time step as:
\begin{equation}
\sum_{m\in\MMM} z_{m,k}=1, \; z_{m,k}\in\{0,1\}, \; m \in \MMM,
\label{eq:box_mode}
\end{equation}
where $\MMM = \MMM_c\cup\MMM_f$.
Define the contact indicator ${c_k=\sum_{i=1}^{4} z_{i,k}}$, then the force and torque satisfy:
\begin{equation}
-\bar f c_k\le f_{j,k}\le\bar f c_k,
\; -\bar \tau c_k \le \tau_k \le \bar \tau c_k.
\label{eq:force_activation}
\end{equation}
Thus, when a free-motion mode is selected, $c_k=0$ and both the contact force and torque vanish.
For a rectangular object with half-width $a$ and half-height $b$, define
$\bbsym v_1 = [-a,b]^\top$,
$\bbsym v_2 = [a,b]^\top$,
$\bbsym v_3 = [a,-b]^\top$,
$\bbsym v_4 = [-a,-b]^\top$.
We define the corresponding inward normals and tangents as $\bbsym n_i$ and $\bbsym t_i$ for $i\in\{1,\dots,4\}$, respectively.
For each contact face, we define $\delta_{i,k} = M(1 - z_{i,k})$ and $\bbsym q^c_{i,k} = \bbsym q_k + r\bbsym n_i$,
where $M$ is the big-$M$ constant and $r$ is the pusher radius. 
The normal and tangential components of the contact force are
\begin{equation}
f^n_{i,k}=\bbsym n_i^\top\bbsym f_k,
\;
f^t_{i,k}=\bbsym t_i^\top\bbsym f_k.
\end{equation}
The finite extent of face $i$ is represented by the minimum and maximum tangential coordinates of its two endpoints:
\begin{equation}
\ell_i = \min_{j\in\{i,i+1\}} \bbsym t_i^\top\bbsym v_j, \; u_i = \max_{j\in\{i,i+1\}} \bbsym t_i^\top\bbsym v_j,
\end{equation}
where the vertex indices are interpreted cyclically, with
$\bbsym v_5 \equiv \bbsym v_1$.
Next, we present the contact constraints that are grouped into contact geometry, contact force, and torque consistency and sticking conditions.
\paragraph{Contact geometry}
The candidate contact point must lie on the supporting line of face $i$ and within its finite tangential extent as:
\begin{align}
-\delta_{i,k} &\le \bbsym n_i^\top (\bbsym q^c_{i,k}-\bbsym v_i) \le \delta_{i,k}, \label{eq:contact_plane} \\
\ell_i-\delta_{i,k} &\le \bbsym t_i^\top\bbsym q^c_{i,k} \le u_i+\delta_{i,k}.
\label{eq:contact_extent}
\end{align}

\paragraph{Contact force}
The contact force must be compressive along the inward normal and satisfy the Coulomb friction constraint as follows:
\begin{align}
-\delta_{i,k} &\le f^n_{i,k} \le \bar f+\delta_{i,k}, \label{eq:contact_normal} \\
-\mu f^n_{i,k}-\delta_{i,k} &\le f^t_{i,k} \le \mu f^n_{i,k}+\delta_{i,k}. \label{eq:contact_friction}
\end{align}

\paragraph{Torque consistency and sticking contact}
If contact mode is active, the contact torque must be equal to the moment generated by the contact force about the object center, leading to the following constraint:
\begin{equation}
-\delta_{i,k} \le \tau_k - \left( q^c_{i,k,x}f_{y,k} - q^c_{i,k,y}f_{x,k} \right) \le \delta_{i,k}.
\label{eq:contact_torque}
\end{equation}
In addition, sticking contact is imposed through:
\begin{equation}
-\delta_{i,k}\mathbf 1 \le \bbsym q_{k+1}-\bbsym q_k \le \delta_{i,k}\mathbf 1.
\label{eq:contact_sticking}
\end{equation}
If $z_{i,k}=1$ then $\delta_{i,k}=0$, and these constraints place the pusher on the selected face, enforce a unilateral frictional contact force and mechanically consistent torque, and maintain a fixed body-frame contact location.
For the free-space motion, for each collision-free region associated with the face ${i \in \MMM_c}$, let
$(\bbsym A_{i,\ell},b_{i,\ell})$, $\ell\in\{0,1,2\}$, denote its normalized bounding hyperplanes, and define ${d_0=r}$, $d_1=d_2=0$.
The corresponding free-space mode is indexed by $i+4$, and if this mode is active, both endpoints of the pusher must satisfy
\begin{equation}
\bbsym A_{i,\ell}^\top\bbsym q_e - b_{i,\ell} \ge d_\ell-M(1-z_{i+4,k}),
\; e\in\{k,k+1\}.
\label{eq:free_region}
\end{equation}
Because each region is convex, constraining both endpoints to the same region ensures that the straight-line pusher motion over the interval remains collision-free.
Finally, direct transitions between different contact faces are prohibited:
\begin{equation}
z_{i,k}+z_{j,k+1} \le 1, \; i\neq j,\; i,j \in \{1, \dots, 4\},
\label{eq:contact_transition}
\end{equation}
so that changing contact faces requires at least one intervening free-motion mode. 
Mode-switching variables satisfy
\begin{equation}
\sigma_k \ge z_{m,k}-z_{m,k+1}, \; \forall m = 1 \ldots, 8,
\label{eq:switch_penalty}
\end{equation}
and are penalized in the objective.
Given a desired object pose $(\bbsym p_g,\theta_g)$, the objective for this task is
\begin{equation}
\begin{multlined}
J = w_{p,f}\norm{\bbsym p_H-\bbsym p_g}_2^2 + w_{\theta,f}\left[1-\cos(\theta_H-\theta_g)\right] \\ 
+ \sum_{k=0}^{H-1} \left( w_p\norm{\bbsym p_k-\bbsym p_g}_2^2 + w_u\norm{\bbsym u_k}_2^2 + w_f\norm{\bbsym f_k}_2^2 \right) \\
+ w_{\sigma}\sum_{k=0}^{H-2} \sigma_k,
\end{multlined}
\label{eq:box_push_obj}
\end{equation}
where $w_{p,f},\, w_{\theta,f},\, w_p,\, w_u,\, w_f$, and $w_{\sigma}$ are nonnegative weights. 
The resulting problem is an MINLP due to the nonlinear rotation $R(\theta_k)$ and the bilinear force-moment relation in \eqref{eq:contact_torque}, alongside the binary variables used to select contact and free-space modes.

\begin{figure}[t]
\centering
\begin{subfigure}[t]{0.49\linewidth}
\centering
\includegraphics[
height=4.8cm,
]{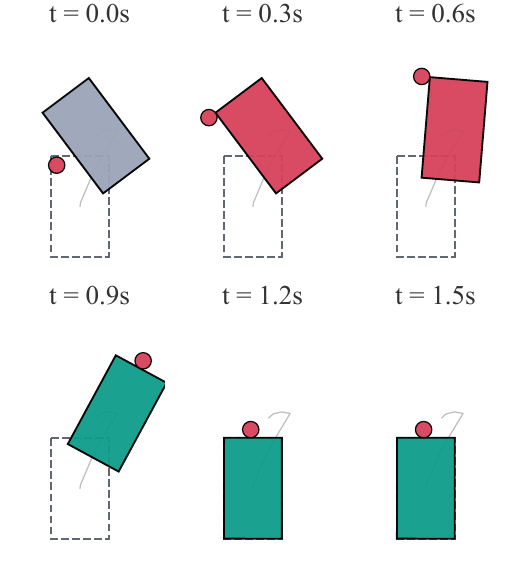}
\label{fig:ex2-snapshot-a}
\end{subfigure}
\hfill
\begin{subfigure}[t]{0.49\linewidth}
\centering
\includegraphics[
height=4.8cm,
]{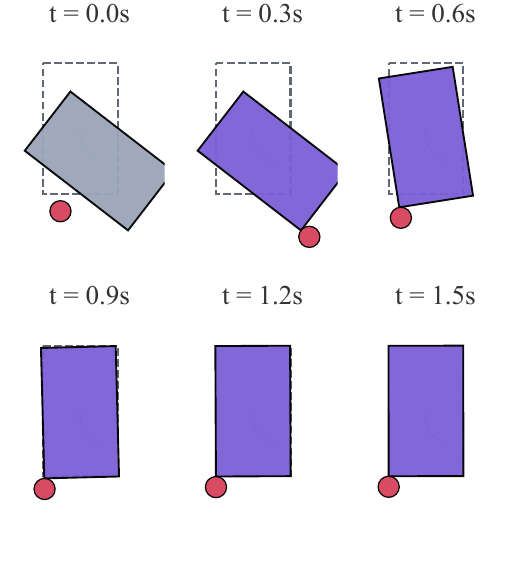}
\label{fig:ex2-snapshot-b}
\end{subfigure}
\vspace{-4mm}
\caption{Snapshots of the planar box-pushing trajectories. Different colors indicate contact with different faces of the box.}
\label{fig:ex2-snapshot}
\vspace{-4mm}
\end{figure}

\subsection{Results and Discussions}

We generate a labeled dataset consisting of $300$ problem instances using the \texttt{GUROBI} optimizer \cite{gurobi} to train the NN using a combination of supervised and self-supervised losses in Phase 1.
Note that obtaining ground-truth labeled solutions is computationally expensive, which limits the number of available training data points. 
As a result, this example is particularly useful for evaluating the proposed method under limited data.
We additionally generate a separate set of $100$ scenarios to evaluate the trained model.
Video visualizations of the results and implementation details are available on the supplemental website:
\url{https://sites.google.com/view/l2o-minlp}.
In Figure~\ref{fig:ex2-snapshot}, we show two representative planar-pushing trajectories, where the pusher switches between box faces to steer the object toward the target pose. 
In both cases, the box successfully reaches the desired configuration using the trained L2O model with $B=8$ and $K=3$.

Next, we focus on evaluating the effect of the SQP refinement using different numbers of SQP layers and solution candidates.
We compute and report the following metrics: (1) the optimality gap with respect to the reference solution obtained using the \texttt{GUROBI} optimizer and (2) the worst-case constraint violation, as shown in Figure~\ref{fig:ex2_boxplot}.
The figure shows that the SQP refinement layer improves both optimality and feasibility relative to the NN predictor, with constraint violations generally decreasing as the number of SQP layers and solution candidates increases.
At $B=8$ and $K=3$, the SQP refinement reduces the constraint violation by approximately $10$ times compared with the NN-only prediction.
Overall, the results consistently indicate that these improvements can be achieved with only a few SQP layers and solution candidates, which supports the use of fixed-depth refinement for real-time implementation.

\begin{figure}[t]
\centering
\includegraphics[width=\linewidth]{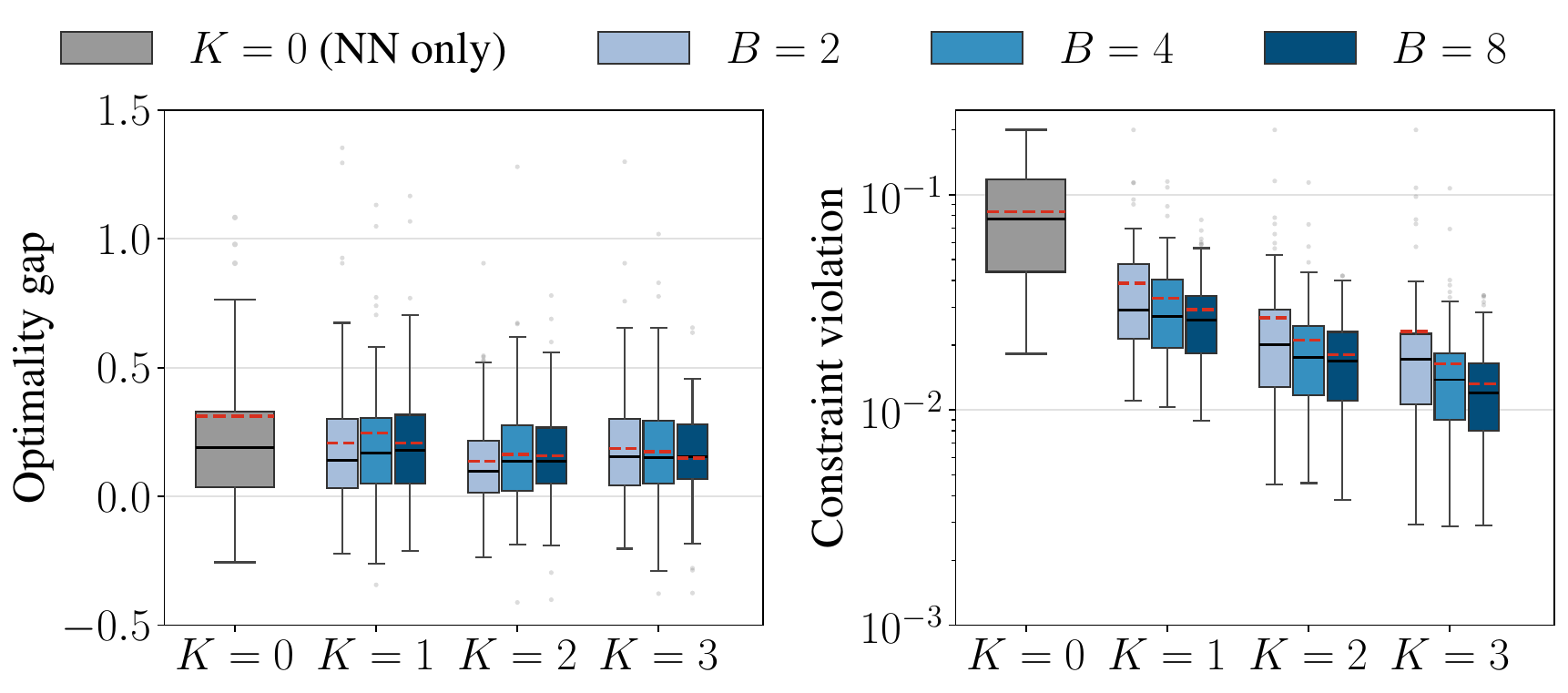}
\caption{A boxplot showing the optimality gap (left) and worst-case constraint violation in log scale (right) over $100$ held-out validation scenarios for $K \in \{0,1,2,3\}$ and $B \in \{2,4,8\}$. Here, $K=0$ denotes the NN predictor without SQP refinement. The black solid and red dashed lines indicate the median and mean, respectively.}
\label{fig:ex2_boxplot}
\vspace{-4mm}
\end{figure}

Next, we show in Table~\ref{tab:planar-pushing-timing} the online computation time of the proposed L2O framework with different $K$ and $B$, together with the baseline \texttt{GUROBI} solver. 
As can be seen from the table, the computational benefits of L2O are evident in this planar-pushing example, where the proposed method requires less than \SI{1}{s} even with $B=8$ and $K=3$, while \texttt{GUROBI} reaches the \SI{400}{s} time limit on most instances.
For this example, \texttt{GUROBI} faces a challenging optimization problem because the discrete contact-mode decisions are tightly coupled with nonlinear force and torque variables through nonconvex contact dynamics, resulting in weaker relaxations and a more difficult branch-and-bound search.
Overall, these runtime comparison results demonstrate a trade-off between computational cost and solution quality as the numbers of candidates and SQP layers increase.

\textbf{Limitations:} 
Although the proposed framework demonstrates promising performance, we identify several limitations. 
First, the training and inference costs increase as the number of SQP layers grows. 
Second, scalability to high-dimensional problems remains challenging, as generating sufficient high-quality training data for such problems may be difficult, and both the forward and backward passes through the QP layers can become computationally expensive. 
Addressing these challenges will be an important direction for future work in L2O for MINLP.

\setlength{\tabcolsep}{2pt}
\begin{table}[t]
\centering
\caption{Runtime comparison (in seconds) for planar box pushing for different
numbers of candidates $B$ and SQP layers $K$.}
\label{tab:planar-pushing-timing}
\small
\begin{tabular*}{\linewidth}{@{\extracolsep{\fill}}lccrrr@{}}
\toprule
Method & $B$ & $K$ & Mean (s) & Min (s) & Max (s) \\
\midrule
\texttt{GUROBI} & -- & -- & $395.29$ & $7.37$ & $400.87$ \\
\midrule
\multirow{9}{*}{L2O}
& $2$ & $1$ & $0.07$ & $0.07$ & $0.08$ \\
& $2$ & $2$ & $0.14$ & $0.14$ & $0.14$ \\
& $2$ & $3$ & $0.23$ & $0.23$ & $0.23$ \\
\cmidrule{2-6}
& $4$ & $1$ & $0.15$ & $0.14$ & $0.15$ \\
& $4$ & $2$ & $0.28$ & $0.28$ & $0.30$ \\
& $4$ & $3$ & $0.47$ & $0.47$ & $0.48$ \\
\cmidrule{2-6}
& $8$ & $1$ & $0.29$ & $0.28$ & $0.30$ \\
& $8$ & $2$ & $0.60$ & $0.59$ & $0.61$ \\
& $8$ & $3$ & $0.92$ & $0.91$ & $0.94$ \\
\bottomrule
\end{tabular*}
\vspace{-4mm}
\end{table}

\section{Conclusions}
\label{sec:conc}

This paper presented an L2O framework for solving parametric MINLPs that combines NN prediction of integer solutions and continuous initializations with trainable SQP refinement.
By generating multiple candidate integer solutions and refining their associated continuous warm starts with a trainable SQP layer, the framework leverages the underlying problem structure to improve feasibility and optimality, while reducing the computational effort required to solve the MINLPs.
The performance was demonstrated on contact scheduling and trajectory optimization for planar box pushing, where the proposed framework improves solution quality relative to using the NN predictor solely, and achieves significant reductions in computation time compared with solving the MINLP using a state-of-the-art numerical solver.

\section{GenAI Disclosure}

ChatGPT (OpenAI) \cite{chatgpt2026} was used to improve the syntax and grammar of several paragraphs in the manuscript.

\bibliographystyle{IEEEtran}
\bibliography{IEEEabrv,references}

\end{document}